\newtheorem{theorem}{Theorem}[section]
\newtheorem{corollary}[theorem]{Corollary}
\theoremstyle{definition}
\newtheorem{definition}[theorem]{Definition}
\newtheorem{example}[theorem]{Example}
\numberwithin{equation}{section}
\newcommand{\bR}{\mbox{$\mathbb R$}}
\begin{document}
	\author{Do Thai Duong}
	\address{Institute of Mathematics\\
		Vietnam Academy of Science and Technology \\ 
		18, Hoang Quoc Viet, Hanoi, Vietnam} 
	\email{dtduong@math.ac.vn}
	\title{A COMPARISON THEOREM FOR SUBHARMONIC FUNCTIONS}
	\maketitle	
	\begin{abstract}
		In this article, we prove an extension of the mean value theorem and a comparison theorem for subharmonic functions. These theorems are used to answer the question whether we can conclude that two subharmonic functions which agree almost everywhere on a surface with respect to the surface measure must coincide everywhere on that surface. We prove that this question has a positive answer in the case of hypersurfaces, and we also provide a counterexample in the case of surfaces of higher co-dimension. We also apply these results to Ahlfors-David sets and we prove other versions of the main results in terms of measure densities.
	\end{abstract}
	\section{Introduction}
	Throughout this paper, we always assume that $\Omega$ is a domain of $\mathbb{R}^n$ $(n\geq2)$. Let  $\mathbb{B}(x,r)$ and $\overline{\mathbb{B}}(x,r)$ respectively denote the open ball and the closed ball in $\mathbb{R}^n$, with center at $x\in\bR^n$ and radius $r>0$. The letters $\lambda$ and $\sigma$ will be used to denote respectively the Lebesgue measure and the surface measure in any dimension and on any surface (the context will always clarify their domains of definition).\\
	According to~\cite{Klimek}, we define the subharmonic function on $\Omega$ as following. Let $u:\Omega\longrightarrow[-\infty,+\infty)$ be an upper semicontinuous function which is not identically $-\infty$. Such a function $u$ is said to be subharmonic if for every relatively compact open subset $G$ of $\Omega$ and every function $\varphi\in\mathcal{H}(G)\cap\mathcal{C}(\bar{G})$, the following implication is true:
	\begin{center}
		$u\leq \varphi$ on $\partial G$ $\Longrightarrow$ $u\leq \varphi$ on $G$,
	\end{center}
	where $\mathcal{H}(G)$ is set of all harmonic functions on $G$ and $\mathcal{C}(\bar{G})$ is set of all continuous functions on $\bar{G}$. In this case, we write $u\in\mathcal{SH}(\Omega)$. It is well-known that if $u,v\in \mathcal{SH}(\Omega)$ and $u= v$ almost everywhere on $\Omega$ with respect to the Lebesgue measure, then $u= v$ on $\Omega$ (see e.g.~\cite{Gar},~\cite{Ho},~\cite{Klimek}).\\
	Our work focuses on extending the above result by considering the comparison of two subharmonic functions on a Borel set, with respect to a Borel measure that satisfies some certain conditions. Here is our first result:
	
	\bigskip
	\noindent \textbf{Main Theorem 1.} (Extension of the mean value theorem)
	\textit{Let $u$ be a subharmonic function in a domain $\Omega$, in $\mathbb{R}^n$ $(n\geq 2)$,
		$K$ be a Borel subset of $\Omega$ and $x_0$ be a point of $K$. Let $h:(0,+\infty)\rightarrow (0,+\infty)$ be a function such that there are real numbers $M>0$ and $c>4$ satisfying:
		$$\int\limits_0^{c\epsilon}\frac{h(r)}{r^{n-1}}dr\leq M \frac{h(\epsilon)}{\epsilon^{n-2}},$$
		for $\epsilon$ small enough. Suppose that there exist a positive Borel measure $\mu$, real numbers $A,B>0$ and $\epsilon_0>0$ satisfying the following:
		\begin{itemize}
			\item[1.] $\mu\big(K\cap \mathbb{B}(x_0,\epsilon)\big)\geq Ah(\epsilon)$ for all $\epsilon<\epsilon_0$,
			\item[2.] $\mu\big(K\cap \mathbb{B}(x,\epsilon)\big)\leq Bh(\epsilon)$ for all $x\in K$ and $\epsilon<\epsilon_0$.
		\end{itemize}	
		Then
		$$\lim\limits_{\epsilon\rightarrow 0}\frac{1}{\mu\big(K\cap\mathbb{B}(x_0,\epsilon)\big)}\int\limits_{K\cap\mathbb{B}(x_0,\epsilon)}u(x)d\mu(x)=u(x_0).$$}
	\bigskip
	
	We use the Main Theorem 1 to prove the second result.
	
	\bigskip
	\noindent\textbf{Main Theorem 2.} (Comparison theorem for subharmonic functions)
	\textit{Let $u$ be an upper semicontinuous function, $v$ be a subharmonic function in a domain $\Omega$, in $\mathbb{R}^n$ $(n\geq 2)$ and let $K$ be a Borel subset of $\Omega$. Let $h:(0,+\infty)\rightarrow (0,+\infty)$ be a function such that there are real numbers $M>0$ and $c>4$ satisfying:
		$$\int\limits_0^{c\epsilon}\frac{h(r)}{r^{n-1}}dr\leq M\frac{h(\epsilon)}{\epsilon^{n-2}},$$
		for $\epsilon$ small enough. Suppose that there exist a positive Borel measure $\mu$, real numbers  $A,B>0$, $\epsilon_0>0$ and $N\subset K$ satisfying the following:
		\begin{itemize}
			\item[1.] $\mu(N)=0,$
			\item[2.] $Ah(\epsilon)\leq\mu\big(K\cap \mathbb{B}(x,\epsilon)\big)\leq Bh(\epsilon)$ for all $x\in K$ and $\epsilon<\epsilon_0$,
			\item[3.] $u\geq v$ on $K\diagdown N$.
		\end{itemize}
		Then $u\geq v$ on $K$.}
	
	\bigskip
	\noindent\textbf{Remark 1.}\textit{ The family of admissible functions,
		\begin{multline*}
			\mathcal{F}=\Bigg\{h:(0,+\infty)\rightarrow (0,+\infty):\, \exists\ M,\epsilon_0>0,\, c>4\, \text{such that}\\
			\int\limits_0^{c\epsilon}\frac{h(r)}{r^{n-1}}dr\leq M\frac{h(\epsilon)}{\epsilon^{n-2}},\, \forall\, \epsilon\in (0,\epsilon_0]. \Bigg\},
		\end{multline*}
		contains many functions such as $r^k$, $r^k|\log r|$ where $k>n-2$. If $h_1,\, h_2\in \mathcal{F}$ then $h_1+h_2\in\mathcal{F}$ and $a\cdot h_1\in\mathcal{F}$ for every positive number $a$. Here is one simple way of interpreting the
		admissible functions $h$. For a function $F:(0,+\infty)\rightarrow (0,+\infty)$, we could
		consider the following property:
		\begin{align}
			\exists c>4\ \text{such that}\ \limsup\limits_{\epsilon\rightarrow0^+}\frac{\frac{1}{c\epsilon}\int\limits_0^{c\epsilon}F(r)dr}{F(\epsilon)}<\infty.\tag{$*$}
		\end{align}
		This can be viewed as a very weak asymptotic one-dimensional mean value property as we are comparing integral means with the value of the integrated function inside the interval of integration. A function $h$ belongs to $\mathcal{F}$ if and only if $F(r)=h(r)/r^{n-1}$ satisfies $(*)$.
	}
	
	\bigskip
	\noindent\textbf{Remark 2.} \textit{When K is appropriate and $h$ is a gauge (i.e. $h:[0,\infty)\rightarrow[0,\infty)$, h is non-decreasing, right-continuous and equal to 0 only at 0), we can apply the main theorems for $\mu$ as Generalized Hausdorff $h$-measures. In particular, if we choose $h(r)=r^k$, the second condition in Main Theorem 2 becomes the Ahlfors-David condition, $\mu$ can be a measure obtained from the classic Caratheodory's construction such as the $k$-dimensional Hausdorff measure, the $k$-dimensional spherical measure or the $k$-dimensional net measure (see~\cite{Mattila} for more information on these measures).}
	
	\bigskip
	From these results, we can conclude that two subharmonic functions which agree almost everywhere on a hypersurface with respect to the surface measure must coincide everywhere on that hypersurface. By constructing a counterexample, it is
	also shown that this property may fail for surfaces of higher co-dimension. We also apply these main results to Ahlfors-David regular sets which have been investigated in different situations such as in connection with some function spaces or in complex and harmonic analysis. In the last section, we prove another versions of the main results in terms of measure densities.
	\section{Preliminaries}
	In this section, we recall some definitions and results that will be used in the sequel.
	Throughout our work, we will use the Riesz Decomposition theorem as a major technical tool. 
	\begin{theorem}[see~\cite{Klimek} and~\cite{Rans}]
		Suppose that $u$ is a subharmonic function in a domain $\Omega$ in $\mathbb{R}^n$ ($n\geq2$). Given a relatively compact open subset $U$ of $\Omega$, we can decompose $u$ as
		$$u(x)=\frac{-1}{\max\{1,n-2\}\sigma\big(\partial \mathbb{B}(0,1)\big)}\int\limits_{U}g(|x-w|)d\nu(w)+\varphi(x),$$
		on $U$ where $\nu=\Delta u|_{U}$, $\varphi\in \mathcal{H}(U)$ and the kernel $g:(0,+\infty)\rightarrow\bR$ is defined by
	\begin{equation}\label{equa_1}
	g(r)=
	\begin{cases}
	-\log r & $(n=2)$ \\
	r^{2-n} & $(n>2)$
	\end{cases}.
	\end{equation}
	
	\end{theorem}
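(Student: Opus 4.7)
The plan is the classical Riesz decomposition via Newtonian/logarithmic potentials. First I would show that the distributional Laplacian $\Delta u$ of a subharmonic function $u$ is a nonnegative Radon measure on $\Omega$. The argument is to mollify: take a smooth compactly supported approximate identity $\rho_{\epsilon}$ and set $u_{\epsilon}=u*\rho_{\epsilon}$; each $u_{\epsilon}$ is smooth and subharmonic, so $\Delta u_{\epsilon}\geq 0$ pointwise, and $u_{\epsilon}\downarrow u$ in $L^{1}_{\mathrm{loc}}$. For every nonnegative $\phi\in C^{\infty}_{c}(\Omega)$ we have $\int u\,\Delta\phi\,dx=\lim_{\epsilon\to 0}\int \phi\cdot\Delta u_{\epsilon}\,dx\geq 0$, so the Riesz representation theorem supplies a positive Radon measure $\mu$ on $\Omega$ with $\int u\,\Delta\phi\,dx=\int \phi\,d\mu$ for every test function. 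Restricting $\mu$ to $U$ produces the finite Borel measure $\nu=\mu|_{U}$, finiteness being a consequence of the compactness of $\overline{U}$.

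Next I would introduce the potential
$$
p(x)\;:=\;-\frac{1}{c_{n}}\int_{U} g\bigl(|x-w|\bigr)\,d\nu(w),\qquad c_{n}:=\max\{1,n-2\}\,\sigma\bigl(\partial\mathbb{B}(0,1)\bigr),
$$
which is well defined and subharmonic in a neighbourhood of $\overline{U}$ because $g$ is locally integrable in $\mathbb{R}^{n}$ and $\nu(U)<\infty$. The key point is that $-g(|\cdot-w|)/c_{n}$ is a fundamental solution of the Laplacian, both for $n=2$ with $g(r)=-\log r$ and for $n\geq 3$ with $g(r)=r^{2-n}$; concretely $\Delta\bigl(-g(|\cdot-w|)/c_{n}\bigr)=\delta_{w}$ in the distributional sense on $\mathbb{R}^{n}$. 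Testing against $\phi\in C^{\infty}_{c}(U)$ and applying Fubini to exchange the $w$-integral with the derivatives in $x$ then gives $\int p\,\Delta\phi\,dx=\int_{U}\phi\,d\nu$, so $\Delta p=\nu$ as distributions on $U$.

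Setting $\varphi:=u-p$, the preceding steps give $\Delta\varphi=0$ on $U$, and Weyl's lemma provides a harmonic function $\tilde{\varphi}\in\mathcal{H}(U)$ with $\varphi=\tilde{\varphi}$ Lebesgue-almost everywhere. Both $u$ and $p+\tilde{\varphi}$ are subharmonic on $U$ and coincide Lebesgue-almost everywhere, so by the standard fact recalled in the introduction, that two subharmonic functions equal a.e.\ must be equal everywhere, the decomposition $u=p+\tilde{\varphi}$ holds pointwise on $U$. I expect the main obstacle to be the justification of the distributional identity $\Delta p=\nu$: because $g$ is singular on the diagonal, Fubini cannot be applied naively to $\int_{U}\int g(|x-w|)\Delta\phi(x)\,dx\,d\nu(w)$. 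The standard workaround is to replace $g(r)$ by the smooth regularisation $g_{\delta}(r)=g(\sqrt{r^{2}+\delta^{2}})$, establish the identity for the regularised kernel by explicit computation, and then pass to the limit as $\delta\to 0$ using dominated convergence together with Green's identity on small balls around the points of $\mathrm{supp}\,\nu$.
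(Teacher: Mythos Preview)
The paper does not prove this statement at all: it is quoted as a classical preliminary result with references to Klimek and Ransford, and no argument is given. Your outline is the standard proof one finds in those references---show $\Delta u\geq 0$ by mollification, form the Newtonian/logarithmic potential of $\nu=\Delta u|_{U}$, use the fundamental-solution identity to get $\Delta(u-p)=0$ on $U$, invoke Weyl's lemma, and upgrade from a.e.\ equality to pointwise equality using the identity principle for subharmonic functions---so there is nothing to compare against and your plan is in line with the intended background.

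One small remark on your write-up: the phrase ``$u_{\epsilon}\downarrow u$ in $L^{1}_{\mathrm{loc}}$'' conflates two facts. The monotone decrease $u_{\epsilon}\downarrow u$ is pointwise (using upper semicontinuity and the sub-mean-value inequality), while convergence in $L^{1}_{\mathrm{loc}}$ is a separate consequence; both are needed, and it is the $L^{1}_{\mathrm{loc}}$ convergence that justifies $\int u_{\epsilon}\,\Delta\phi\to\int u\,\Delta\phi$. Also, when you assert that $p$ is subharmonic ``in a neighbourhood of $\overline{U}$'', note that since $\nu$ is carried by the open set $U$, the potential $p$ is actually subharmonic on all of $\mathbb{R}^{n}$ and harmonic off $\overline{U}$; no special neighbourhood is needed. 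These are cosmetic points; the argument is sound.
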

	The next classical result is useful for proofs of main theorems. 
	\begin{theorem}[see Theorem 1.15 in~\cite{Mattila}]\label{theo1}
		Let $\mu$ be a Borel measure and $f$ be a non-negative Borel function on a separable metric space $X$. Then
		$$\int\limits_X fd\mu=\int\limits_0^{+\infty}\mu(\{x\in X:f(x)\geq t\})dt. $$
	\end{theorem}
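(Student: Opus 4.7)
The plan is to establish the identity by climbing the standard measure-theoretic ladder: first for indicators, then for non-negative simple functions by linearity, and finally for arbitrary non-negative Borel $f$ via monotone convergence. A one-line Fubini/Tonelli argument applied to the hypograph $\{(x,t)\in X\times[0,+\infty):0\leq t\leq f(x)\}$ would also give the identity, but only under $\sigma$-finiteness of $\mu$; the ladder approach avoids that restriction, so I would prefer it here. Separability of $X$ is used only to guarantee the Borel measurability of the sets $\{f\geq t\}$ and of the simple approximants, and plays no deeper role.

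The base cases are routine. For $f=\chi_A$ with $A$ Borel, $\{f\geq t\}=A$ for $t\in(0,1]$ and $\emptyset$ for $t>1$, so both sides equal $\mu(A)$. For a non-negative simple function $f=\sum_{i=1}^{k}c_i\chi_{A_i}$ with disjoint Borel $A_i$ and $0<c_1<\cdots<c_k$, one has $\{f\geq t\}=\bigsqcup_{i:\,c_i\geq t}A_i$; splitting the $t$-integral at the jump points $c_1,\ldots,c_k$ and rearranging (with the convention $c_0=0$) yields
$$\int_0^{+\infty}\mu(\{f\geq t\})\,dt=\sum_{j=1}^{k}(c_j-c_{j-1})\sum_{i\geq j}\mu(A_i)=\sum_{i=1}^{k}c_i\mu(A_i)=\int_X f\,d\mu.$$

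For the general case I would choose an increasing sequence of non-negative simple Borel functions $f_n\uparrow f$; monotone convergence on the left gives $\int_X f_n\,d\mu\uparrow\int_X f\,d\mu$. For the right-hand side, continuity of $\mu$ from below produces $\mu(\{f_n\geq t\})\uparrow\mu\bigl(\bigcup_n\{f_n\geq t\}\bigr)$, so applying monotone convergence on $[0,+\infty)$ in the $t$-variable together with Step~2 identifies the limit with $\int_X f\,d\mu$. The main obstacle is that $\bigcup_n\{f_n\geq t\}$ equals $\{f>t\}$ rather than $\{f\geq t\}$: if $f_n(x)\geq t$ for some $n$, one only concludes $f(x)\geq t$, whereas $f(x)>t$ does force $f_n(x)\geq t$ eventually. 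I would resolve this by observing that the non-increasing function $t\mapsto\mu(\{f\geq t\})$ has at most countably many discontinuities and differs from $t\mapsto\mu(\{f>t\})$ only at those jumps; the two functions therefore coincide Lebesgue-almost everywhere on $(0,+\infty)$, so their integrals agree and the identity closes.
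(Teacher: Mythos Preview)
The paper does not actually prove this statement: it is quoted from Mattila's book (Theorem~1.15 there) and used as a black box in the proof of Main Theorem~1, so there is no ``paper's own proof'' to compare against. That said, your argument is the standard correct proof of the layer-cake (Cavalieri) formula. The indicator and simple-function steps are fine, and your handling of the limit is the only place requiring care: you correctly observe that $\bigcup_n\{f_n\geq t\}$ sits between $\{f>t\}$ and $\{f\geq t\}$, and that the monotone function $t\mapsto\mu(\{f\geq t\})$ agrees with its right limit $t\mapsto\mu(\{f>t\})$ off a countable set, so the two $t$-integrals coincide. One small inaccuracy in your commentary: separability of $X$ is \emph{not} what guarantees that $\{f\geq t\}$ is Borel---that is immediate from $f$ being Borel measurable in any measurable space. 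The separability hypothesis is a standing assumption in Mattila's book rather than something the identity itself requires; your proof in fact works for an arbitrary measure space, which is a point in its favour.
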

	Next, we present the definitions of the $k$-dimensional Hausdorff measure, the Ahlfors-David regular set, the upper and lower densities of a Radon measure.
	\begin{definition}[see~\cite{Mattila}]
	Let $A\subset\bR^n$, we define:
	$$H^k_\delta(A)=\inf\{\sum_i d(E_i)^k: A\subset\bigcup_i E_i, d(E_i)\leq\delta\},$$ where $d(E)$ is the diameter of $E$:
	$$d(E)=\sup\limits_{x,y\in E}|x-y|.$$
	The $k$-dimensional Hausdorff measure of $A$, denoted by $H^k(A)$, is defined by
		$$H^k(A)=\lim\limits_{\delta\downarrow0}H^k_\delta(A).$$
	\end{definition}
	\begin{definition}[see~\cite{David},~\cite{Mattila09} and~\cite{Wang}]
		A subset $E$ of $\mathbb{R}^n$ is said to be Ahlfors-David regular with dimension $k$ if it is closed and if there is a constant $C_0>0$ such that
		$$C_0^{-1} R^k\leq H^k(E\cap \mathbb{B}(x,R))\leq C_0 R^k, $$
		for all $x\in E$ and $0<R<d(E)$.
	\end{definition}
	\begin{definition}[see~\cite{Mattila}]
		Let $0\leq s<\infty$ and let $\eta$ be a Radon measure on $\mathbb{R}^n$. The upper and lower $s$-densities of $\eta$ at $x\in\mathbb{R}^n$ are defined by
		$$\Theta^{*s}(\eta,x)=\limsup\limits_{r\downarrow0}\frac{\eta\big(\mathbb{B}(x,r)\big)}{(2r)^s},$$
		$$\Theta_*^s(\eta,x)=\liminf\limits_{r\downarrow0}\frac{\eta\big(\mathbb{B}(x,r)\big)}{(2r)^s}.$$
	\end{definition}
	We end this preparatory section by presenting a notation which will be used in the last section.
		For a Borel measure $\eta$ on $\mathbb{R}^n$ and a Borel set $K$, we define:
		$$\eta_K(E)=\eta(K\cap E).$$
	It is clear that $\eta_K$ is also a Borel measure on $\mathbb{R}^n$.
	\section{proof of main results}
	\begin{proof}[Proof of Main Theorem 1]
	It is sufficient to prove the theorem when $x_0=0$. We choose $\epsilon_1<\frac{\epsilon_0}{c}$ such that the inequality in the condition of $h$ holds for $\epsilon\leq\epsilon_1$ and we choose $\gamma>1$, $p>1$ such that $c=2\gamma(1+p)$. For convenience, we set $K_\epsilon=K\cap\mathbb{B}(0,\epsilon)$. Our goal is to establish:
		\begin{equation}\label{equa_2}
			\lim\limits_{\epsilon\rightarrow 0}\frac{1}{\mu(K_\epsilon)}\int\limits_{K_\epsilon}u(x)d\mu(x)=u(0).
		\end{equation}
		If $u(0)=-\infty$, then we have
		$$\limsup\limits_{\epsilon\rightarrow0}\frac{1}{\mu(K_\epsilon)}\int\limits_{K_\epsilon}u(x)d\mu(x)\leq\limsup\limits_{\epsilon\rightarrow0}\sup\limits_{y\in K_\epsilon}u(y)\\
			\leq u(0)=-\infty.$$
		Hence, $$\lim\limits_{\epsilon\rightarrow0}\frac{1}{\mu(K_\epsilon)}\int\limits_{K_\epsilon}u(x)d\mu(x)=u(0).$$
		We now turn to the case where $u(0)>-\infty$. Since the problem is local, we can assume that $\mathbb{B}(0,1)\Subset\Omega$. By the Riesz Decomposition theorem, we can write
		\begin{equation}\label{equa_7}
			u(x)=\frac{-1}{\max\{1,n-2\}\sigma\big(\partial \mathbb{B}(0,1)\big)}\int\limits_{\mathbb{B}(0,1)}g(|x-w|)d\nu(w)+\varphi(x),
		\end{equation}
		on $\mathbb{B}(0,1)$, where $\nu=\Delta u|_{\mathbb{B}(0,1)}$ is a Radon nonnegative measure, $\varphi\in \mathcal{H}\big(\mathbb{B}(0,1)\big)$ and the kernel $g$ is defined by (\ref{equa_1}).	
		It thus follows from equality (\ref{equa_7}), Fubini's theorem and the continuity of $\varphi$ that
		\begin{equation}\label{equa_5}
		\begin{split}
		&\lim\limits_{\epsilon\rightarrow 0}\frac{1}{\mu(K_\epsilon)}\int\limits_{K_\epsilon}u(x)d\mu(x)=\\
		&\lim\limits_{\epsilon\rightarrow 0}\frac{-1}{\max\{1,n-2\}\sigma\big(\partial \mathbb{B}(0,1)\big)}\int\limits_{ \mathbb{B}(0,1)}f_\epsilon(w)d\nu(w)+\varphi(0),
		\end{split}
		\end{equation}
		where $$f_\epsilon(w)=\frac{1}{\mu(K_\epsilon)}\int\limits_{K_\epsilon}g(|x-w|)d\mu(x).$$
		By replacing $x$ by $0$ in equality (\ref{equa_7}), we have
		\begin{equation}\label{equa_6}
		u(0)=\frac{-1}{\max\{1,n-2\}\sigma\big(\partial \mathbb{B}(0,1)\big)}\int\limits_{\mathbb{B}(0,1)}g(|w|)d\nu(w)+\varphi(0).
		\end{equation}
		By (\ref{equa_5}) and (\ref{equa_6}), the equality (\ref{equa_2}) is equivalent to
		\begin{equation}\label{equa_4}
			\lim\limits_{\epsilon\rightarrow0}\int\limits_{ \mathbb{B}(0,1)}f_\epsilon(w)d\nu(w)=\int\limits_{ \mathbb{B}(0,1)}g(|w|)d\nu(w).
		\end{equation}
		We prove (\ref{equa_4}) by three steps:\\
		\textbf{Step 1:} We claim that
		$$f_\epsilon(w)\rightarrow g(|w|),\ \epsilon\rightarrow0,$$ almost everywhere on $\mathbb{B}(0,1)$ with respect to $\nu$. Indeed, consider (\ref{equa_6}), since the measure $\nu$ is nonnegative and the function $g$ is positive and decreasing on $(0,1)$, we have
		\begin{align*}
			u(0)&\leq\frac{-1}{\max\{1,n-2\}\sigma\big(\partial \mathbb{B}(0,1)\big)}\int\limits_{\mathbb{B}(0,\delta)}g(|w|)d\nu(z)+\varphi(0)\\
			&\leq\frac{-1}{\max\{1,n-2\}\sigma\big(\partial \mathbb{B}(0,1)\big)}g(\delta)\nu\big(\mathbb{B}(0,\delta)\big)+\varphi(0),
		\end{align*}
	for all $0<\delta<1$. This implies that 
	$$\nu(\mathbb{B}(0,\delta))\leq\max\{1,n-2\}\sigma(\partial \mathbb{B}(0,1))\frac{-u(0)+\varphi(0)}{g(\delta)},$$ 
	for all $0<\delta<1$. By letting $\delta\rightarrow0$, we conclude that $\nu(\mathbb{B}(0,\delta))\searrow0$ when $\delta\searrow0$. Hence $\nu(\{{0\}})=0$. Combining this with the fact that $$f_\epsilon(w)\rightarrow g(|w|),$$ pointwise for all $w\in\mathbb{B}(0,1)\diagdown\{0\}$, we see that the claim holds.\\
	\textbf{Step 2:} We will show that there exist constants $C_1, C_2>0$ such that 
		$$f_\epsilon(w)\leq C_1g(|w|)+C_2,$$ for all $w\in \mathbb{B}(0,1)\diagdown\{0\}$ and $\epsilon<\epsilon_1$. We split the proof into two cases: Case I, where $|w|> p\epsilon$, and Case II, where $|w|\leq p\epsilon$.\\
		We first consider \textbf{Case I}, the case where $|w|> p\epsilon$. We observe that, for $x\in K_\epsilon$,
		$$|x-w|\geq|w|-|x|\geq|w|-\epsilon>|w|-\frac{|w|}{p}=\frac{ p-1}{p}|w|.$$
		Therefore, by the definition of $f_\epsilon$ and the decrease of the function $g$ on $(0,1)$,
		\begin{align*}
			f_\epsilon(w)&\leq \frac{1}{\mu(K_\epsilon)}\int\limits_{K_\epsilon}g(|w|-|x|)d\mu(x)\\
			&\leq\frac{1}{\mu(K_\epsilon)}\int\limits_{K_\epsilon}g\Big(\frac{ p-1}{p}|w|\Big)d\mu(x)\\
			&=g\Big(\frac{ p-1}{ p}|w|\Big).
		\end{align*}
		The proof of Case I is finished by noticing that 
		$$g\Big(\frac{ p-1}{ p}|w|\Big)=
		\begin{cases}
		g(|w|)+g\Big(\frac{ p-1}{ p}\Big) & $(n=2)$\\
		\Big(\frac{ p-1}{ p}\Big)^{2-n}\ g(|w|) & $(n>2)$
		\end{cases}.
		$$
		Next, we study \textbf{Case II}, the case where $|w|\leq p\epsilon$. By Theorem \ref{theo1} and the definition of $f_\epsilon$, we have
		\begin{align*}
			f_\epsilon(w)&=\frac{1}{\mu(K_\epsilon)}\int\limits_0^{+\infty}\mu\Big(\{x\in K_\epsilon: g(|x-w|)\geq t \}\Big)dt\\
			&=\frac{1}{\mu(K_\epsilon)}\int\limits_0^{+\infty}\mu\Big( K_\epsilon\cap \overline{\mathbb{B}}\big(w,g^{-1}(t)\big) \Big)dt
		\end{align*}
		Therefore, by splitting the integral at $\alpha(\epsilon)$ where $\alpha(\epsilon)=g(|w|+\epsilon)$,
			$$f_\epsilon(w)=\frac{1}{\mu(K_\epsilon)} \int\limits_0^{\alpha(\epsilon)}\mu\Big( K_\epsilon\cap \overline{\mathbb{B}}\big(w,g^{-1}(t)\big)\Big)dt + \frac{1}{\mu(K_\epsilon)}\int\limits_{\alpha(\epsilon)}^{+\infty}\mu\Big( K_\epsilon\cap \overline{\mathbb{B}}\big(w,g^{-1}(t)\big)\Big)dt.$$
	For the first term of the right-hand side, it is clear that $\mathbb{B}(0,\epsilon)\subset \mathbb{B}\big(w,g^{-1}(t)\big)$ and hence 
		\begin{equation}\label{ineq_1}
		\mu\Big( K_\epsilon\cap \overline{\mathbb{B}}\big(w,g^{-1}(t)\big)\Big)=\mu(K_\epsilon),\ \text{for all}\ 0\leq t\leq\alpha(\epsilon). 
		\end{equation}
		For the second term of the right-hand side, if $K\cap \mathbb{B}\big(w,\gamma g^{-1}(t)\big)=\emptyset$ then $\mu\Big( K_\epsilon\cap \overline{\mathbb{B}}\big(w,g^{-1}(t)\big)\Big)=0$. Otherwise,  
		$$K_\epsilon\cap \overline{\mathbb{B}}\big(w,g^{-1}(t)\big)\subset K\cap \mathbb{B}\big(w,\gamma g^{-1}(t)\big)\subset K\cap\mathbb{B}\big(w_0,2\gamma g^{-1}(t)\big),$$ where $w_0\in K\cap \mathbb{B}\big(w,\gamma g^{-1}(t)\big)$. Therefore, by the assumption of the theorem and the fact that $2\gamma g^{-1}(t)<\epsilon_0$ for all $t\geq\alpha(\epsilon)$ and $\epsilon<\epsilon_1$, we obtain
		\begin{equation}\label{ineq_2}
		\mu\Big( K_\epsilon\cap \overline{\mathbb{B}}\big(w,g^{-1}(t)\big)\Big)\leq Bh\big(2\gamma g^{-1}(t)\big),\ \text{for all}\ t\geq\alpha(\epsilon).
		\end{equation}		
		Thus, combining (\ref{ineq_1}), (\ref{ineq_2}) with the expression of $f_\epsilon$, we have for any $\epsilon<\epsilon_1$, 
		\begin{align*}
			f_\epsilon(w)
			&\leq\frac{1}{\mu(K_\epsilon)} \int\limits_0^{\alpha(\epsilon)}\mu(K_\epsilon)dt +\frac{B}{\mu(K_\epsilon)} \int\limits_{\alpha(\epsilon)}^{+\infty}h(2\gamma g^{-1}(t)) dt\\
			&=\alpha(\epsilon)+\frac{(2\gamma)^{n-2}\max\{1,n-2\}B}{\mu(K_\epsilon)} \int\limits_{0}^{2\gamma(|w|+\epsilon)}\frac{h(r)}{r^{n-1}} dr\\
			&\leq g\big(|w|+\epsilon\big)+\frac{(2\gamma)^{n-2}\max\{1,n-2\}B}{Ah(\epsilon)}\int\limits_{0}^{2\gamma(p+1)\epsilon}\frac{h(r)}{r^{n-1}} dr\\
			&\leq g(|w|)+\frac{(2\gamma)^{n-2}\max\{1,n-2\}BM}{A\epsilon^{n-2}},
		\end{align*}
		the last inequality is deduced from the assumption of $h$. Furthermore, by the assumption of Case II, we get
		\begin{equation}\label{equa_3}
		f_\epsilon(w)\leq g(|w|)+\frac{(2\gamma p)^{n-2}\max\{1,n-2\}BM}{A|w|^{n-2}},
		\end{equation}
		for all $\epsilon<\epsilon_1$. It is clear that the second term in the right-hand side of (\ref{equa_3}) is a constant when $n=2$ and is equal to
		$$\frac{(2\gamma p)^{n-2}(n-2)BM}{A}g(|w|)$$
		when $n>2$. The proof of Case II and, therefore, of Step 2 is complete.\\
		\textbf{Step 3:} By conclusions of Step 1, Step 2 and the Lebesgue's Dominated Convergence theorem, we derive (\ref{equa_4}), which completes the proof.
	\end{proof}
	Using the extension of the mean value theorem, we now ready to prove the second main result.
	\begin{proof}[Proof of Main Theorem 2]
		Let $x_0\in K$, it is sufficient to show that $u(x_0)\geq v(x_0)$.
		By the upper semicontiniuty of $u$, we have
			$$u(x_0)\geq\lim\limits_{\epsilon\rightarrow0}\frac{1}{\mu\big(K\cap\mathbb{B}(x_0,\epsilon)\big)}\int\limits_{K\cap\mathbb{B}(x_0,\epsilon)}u(x)d\mu(x).$$
			Following Theorem 3.1, we get 
		$$v(x_0)=\lim\limits_{\epsilon\rightarrow0}\frac{1}{\mu\big(K\cap\mathbb{B}(x_0,\epsilon)\big)}\int\limits_{K\cap\mathbb{B}(x_0,\epsilon)}v(x)d\mu(x).$$
		Since $u\geq v$ almost everywhere on $K$ with respect to $\mu$, we infer that for every $\epsilon>0$, $$\int\limits_{K\cap\mathbb{B}(x_0,\epsilon)}u(x)d\mu(x)\geq\int\limits_{K\cap\mathbb{B}(x_0,\epsilon)}v(x)d\mu(x).$$
		Combining the above inequalities gives $u(x_0)\geq v(x_0)$, as desired.
	\end{proof}
	\section{Some Consequences and a counterexample}
	By applying the main results in the case where $K$ is a hypersurface, $\mu$ is the surface measure on $K$ and $h(r)=r^{n-1}$, we obtain the following immediate corollaries:
	\begin{corollary}
		Let $u$ be a subharmonic function in a domain $\Omega$, in $\mathbb{R}^n$ $(n\geq 2)$ and $\mathbb{H}$ be a hypersurface. Then
		$$\lim\limits_{\epsilon\rightarrow 0}\frac{1}{\sigma\big(\mathbb{H}\cap\mathbb{B}(x_0,\epsilon)\big)}\int\limits_{\mathbb{H}\cap\mathbb{B}(x_0,\epsilon)}u(x)d\sigma(x)=u(x_0)$$ for all $x_0\in \mathbb{H}\cap\Omega$, where $\sigma$ is the surface measure on $\mathbb{H}$.
	\end{corollary}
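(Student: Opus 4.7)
The plan is to deduce this corollary as a direct application of Main Theorem 1 with the concrete choices $K=\mathbb{H}$, $\mu=\sigma$, and $h(r)=r^{n-1}$. So the work reduces to verifying the three hypotheses of that theorem at an arbitrary fixed point $x_0\in\mathbb{H}\cap\Omega$.

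First I would check the integral growth condition on $h$. With $h(r)=r^{n-1}$ the integrand $h(r)/r^{n-1}$ equals $1$, so for any $c>4$,
\[
\int_0^{c\epsilon}\frac{h(r)}{r^{n-1}}\,dr \;=\; c\epsilon \;=\; c\cdot\frac{h(\epsilon)}{\epsilon^{n-2}},
\]
which gives the required inequality with $M=c$, uniformly in $\epsilon$. So $h\in\mathcal{F}$.

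Next I would verify the two-sided density estimate for $\sigma$ on $\mathbb{H}$. Because the statement is local (we are only concerned with balls of small radius around $x_0$), I can use a $C^1$ chart of $\mathbb{H}$ around $x_0$: in suitable coordinates, $\mathbb{H}$ is the graph of a $C^1$ function $\psi$ on an $(n-1)$-dimensional disk, with $\psi(0)=x_0$ and small Lipschitz constant. Since the Jacobian $\sqrt{1+|\nabla\psi|^2}$ is bounded above and below by positive constants near $0$, the surface measure on $\mathbb{H}\cap\mathbb{B}(x,\epsilon)$ is comparable to the $(n-1)$-dimensional Lebesgue measure of the projection of this set onto the tangent hyperplane at $x_0$. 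Standard geometry then gives constants $A,B>0$ and $\epsilon_0>0$ such that
\[
A\epsilon^{n-1}\;\leq\;\sigma\bigl(\mathbb{H}\cap\mathbb{B}(x,\epsilon)\bigr)\;\leq\;B\epsilon^{n-1}
\]
for all $x$ in a neighborhood of $x_0$ on $\mathbb{H}$ and all $\epsilon<\epsilon_0$; in particular, both hypotheses 1 and 2 of Main Theorem 1 hold with this $h$. Applying Main Theorem 1 then yields the claimed limit, and since $x_0$ was arbitrary we are done.

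The main (and only nontrivial) obstacle is the density estimate for $\sigma$. This relies on $\mathbb{H}$ being regular enough — $C^1$ (or at least Lipschitz with small constant in a chart) — for the graph parametrization argument above to produce uniform constants. If the paper intends $\mathbb{H}$ to mean a general Borel set of Hausdorff dimension $n-1$, extra care would be needed; however, under the standard interpretation of "hypersurface" as a $C^1$ submanifold of codimension one, this step is routine and the corollary really is immediate.
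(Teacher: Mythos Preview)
Your proposal is correct and follows exactly the paper's approach: the paper states this corollary as an immediate application of Main Theorem~1 with $K=\mathbb{H}$, $\mu=\sigma$, and $h(r)=r^{n-1}$, without spelling out the verifications you provide. One small remark: since you only verify hypothesis~2 for $x$ near $x_0$, you should take $K$ to be a local chart of $\mathbb{H}$ around $x_0$ rather than all of $\mathbb{H}$ (which is harmless, as the limit depends only on small balls around $x_0$).
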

	\begin{corollary}
		Let $\Omega$ be a domain in $\mathbb{R}^n$ $(n\geq 2)$ and let $\mathbb{H}$ be a hypersurface such that $\mathbb{H}\cap\Omega\neq\emptyset$. Let $u$ be an upper semicontinuous function and $v$ be a subharmonic function in $\Omega$. Suppose that $u\geq v$ almost everywhere on $\mathbb{H}\cap\Omega$ with respect to the surface measure on $\mathbb{H}$. Then $u\geq v$ on $\mathbb{H}\cap\Omega$.
	\end{corollary}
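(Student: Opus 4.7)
The plan is to reduce the statement to a direct application of Main Theorem 2. Fix an arbitrary point $x_0\in\mathbb{H}\cap\Omega$; since the conclusion is pointwise, it suffices to establish $u(x_0)\geq v(x_0)$, and this is a purely local question. Thus I would choose a ball $U=\mathbb{B}(x_0,\rho)\Subset\Omega$ small enough that $\mathbb{H}\cap\overline U$ is a graph of a smooth (or at least Lipschitz) function over a piece of the tangent hyperplane at $x_0$. Inside such a chart, the area formula gives two-sided bounds
\[
A\,\epsilon^{n-1}\leq\sigma\bigl(\mathbb{H}\cap\mathbb{B}(x,\epsilon)\bigr)\leq B\,\epsilon^{n-1}
\]
for every $x\in\mathbb{H}\cap\mathbb{B}(x_0,\rho/2)$ and every $\epsilon<\epsilon_0$, with constants $A,B,\epsilon_0$ depending only on the chart. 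This is the uniform Ahlfors--David condition needed in Main Theorem 2.

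Next, I would take $h(r)=r^{n-1}$. Since $h(r)/r^{n-1}\equiv 1$, the admissibility condition reads $c\epsilon\leq M\epsilon$, which holds with $M=c$ for any fixed $c>4$; equivalently, $h\in\mathcal F$ because $n-1>n-2$ (Remark 1). Let $N\subset\mathbb{H}\cap U$ be the exceptional Borel set on which $u<v$; by hypothesis $\sigma(N)=0$. With $\Omega'=U$, $K=\mathbb{H}\cap\mathbb{B}(x_0,\rho/2)$, $\mu=\sigma|_{\mathbb{H}}$, and this $N$, all three assumptions of Main Theorem 2 are verified, so the theorem yields $u\geq v$ on $K$, and in particular $u(x_0)\geq v(x_0)$.

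The only real obstacle is the verification of the uniform two-sided surface-measure bounds, and this is standard: on a $C^1$ (or Lipschitz) graph, the Jacobian of the parametrization is bounded above and below by positive constants on any compact chart, so the surface measure of $\mathbb{H}\cap\mathbb{B}(x,\epsilon)$ is comparable to the Lebesgue measure in $\mathbb{R}^{n-1}$ of the projection of that set, which in turn is comparable to $\epsilon^{n-1}$. Once this is in place, the corollary follows immediately by invoking Main Theorem 2 at each point $x_0\in\mathbb{H}\cap\Omega$.
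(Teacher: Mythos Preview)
Your proposal is correct and follows essentially the same approach as the paper: the corollary is stated there as an ``immediate'' application of Main Theorem~2 with $K=\mathbb{H}\cap\Omega$, $\mu=\sigma$, and $h(r)=r^{n-1}$. Your write-up is in fact more careful than the paper's one-line justification, since you explicitly localize to a chart in order to obtain \emph{uniform} constants $A,B,\epsilon_0$ in the two-sided bound (as required by hypothesis~2 of Main Theorem~2), and you spell out why $h(r)=r^{n-1}$ is admissible; the paper simply takes all of this for granted.
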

	Next, we construct a counterexample to show that the comparison theorem for subharmonic functions will be false if we consider $K$ as a surface of dimension $k\leq n-2$, $\mu$ as the surface measure and $h(r)=r^k$. This means that the Corollary 4.2 is no longer true when $\mathbb{H}$ is a surface of dimension $k\leq n-2$.
	\begin{example}[Counterexample]
		In $\mathbb{R}^n$ $(n\geq3)$, we denote by $\mathbb{B}_{n-2}(0,R)$ the open ball in $\bR^{n-2}$, with center at 0 and radius $R>0$. 
		For $i\geq2$, let $\mu_i$ be the measure defined on $\big(\mathbb{B}_{n-2}(0,i)\diagdown\mathbb{B}_{n-2}(0,\frac{1}{i})\big)\times\{0\}\times\{0\}$ generated by the
		 $(n-2)$-dimensional Lebesgue measure on $\big(\mathbb{B}_{n-2}(0,i)\diagdown\mathbb{B}_{n-2}(0,\frac{1}{i})\big)$. We define potentials $p_{\mu_i}:\mathbb{R}^n\longrightarrow[-\infty,\infty)$ by
		$$p_{\mu_i}(x)=\int\limits_{\mathbb{\mathbb{R}}^n}\frac{-1}{|x-w|^{n-2}}d\mu_i(w).$$
		Then we obtain the sequence $\{p_{\mu_i}\}_{i\geq2}\subset\mathcal{SH}(\mathbb{R}^n)$ which satisfies these properties:\\
		$$\begin{cases}
		p_{\mu_i}\leq0\ \text{on}\ \mathbb{R}^n,\\
		p_{\mu_i}=-\infty\ \text{on}\ \big(\mathbb{B}_{n-2}(0,i)\diagdown\mathbb{B}_{n-2}(0,\frac{1}{i})\big)\times\{0\}\times\{0\},\\
		-\infty<p_{\mu_i}(0)<0,
		\end{cases}$$
		for all $i\geq2$. By setting $$u_i(x)=-\frac{p_{\mu_i}(x)}{p_{\mu_i}(0)},$$ the sequence $\{u_i\}_{i\geq2}\subset\mathcal{SH}(\mathbb{R}^n)$ has these properties:
		$$\begin{cases}
		u_i\leq0\ \text{on}\ \mathbb{R}^n,\\
		u_i=-\infty\ \text{on}\ \big(\mathbb{B}_{n-2}(0,i)\diagdown\mathbb{B}_{n-2}(0,\frac{1}{i})\big)\times\{0\}\times\{0\},\\
		u_i(0)=-1,
		\end{cases}$$
		for all $i\geq2$. Now we define  
		$$u(x)=\sum_{i=2}^\infty \frac{1}{2^{i-1}}u_i(x),$$
		hence
		$$\begin{cases}
		u\in\mathcal{SH}(\mathbb{R}^n),\\
		u=-\infty\ \text{on}\ \big(\mathbb{R}^{n-2}\diagdown\{0\}\big)\times\{0\}\times\{0\},\\
		u(0)=-1.
		\end{cases}$$
		Therefore, the function $\widetilde{u}=\max(u,-2)$ satisfies:
		$$\begin{cases}
		\widetilde{u} \in\mathcal{SH}(\mathbb{R}^n),\\
		\widetilde{u}=-2\ \text{on}\ \big(\mathbb{R}^{n-2}\diagdown\{0\}\big)\times\{0\}\times\{0\},\\ \widetilde{u}(0)=-1.
		\end{cases}$$
		Finally, by setting $\widetilde{v}\equiv-2$, we conclude that $\widetilde{v}\geq \widetilde{u}$ almost everywhere on $\mathbb{R}^{n-2}\times\{0\}\times\{0\}$ with respect to $(n-2)$-dimensional the Lebesgue measure on $\mathbb{R}^{n-2}\times\{0\}\times\{0\}$, but not everywhere as $\widetilde{v}(0)<\widetilde{u}(0)$.
	\end{example}
	Next, we apply the main theorems to Ahlfors-David regular sets. Considering $h(r)=r^k$ where $k>n-2$, these corollaries are direct consequences of the main results.
	\begin{corollary}
		Let $u$ be a subharmonic function in a domain $\Omega$, in $\mathbb{R}^n$ $(n\geq 2)$ and $E\subset\Omega$ be an Ahlfors-David regular set with dimension $k>n-2$. Then
		$$\lim\limits_{\epsilon\rightarrow 0}\frac{1}{H^k(E\cap\mathbb{B}(x,\epsilon))}\int\limits_{E\cap\mathbb{B}(x,\epsilon)}u(y)dH^k(y)=u(x)$$ for all $x\in E$.
	\end{corollary}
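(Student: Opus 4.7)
The approach is to invoke Main Theorem 1 directly with the choices $K = E$, the measure $\mu = H^k$ (a Borel measure on $\mathbb{R}^n$), and the gauge $h(r) = r^k$. With these substitutions, the conclusion of Corollary 4.4 at an arbitrary point $x \in E$ coincides verbatim with the conclusion of Main Theorem 1 at $x_0 = x$. So the proof reduces entirely to verifying that $h \in \mathcal{F}$ and that $\mu$ satisfies the two density bounds appearing in the hypotheses.

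For admissibility of $h$, the hypothesis $k > n-2$ ensures that $k - n + 1 > -1$, so $r^{k-n+1}$ is integrable near the origin. A direct computation gives
\[
\int_0^{c\epsilon}\frac{h(r)}{r^{n-1}}\,dr \;=\; \frac{c^{\,k-n+2}}{k-n+2}\;\epsilon^{\,k-n+2} \;=\; \frac{c^{\,k-n+2}}{k-n+2}\cdot\frac{h(\epsilon)}{\epsilon^{n-2}},
\]
so any $c>4$ works with $M = c^{\,k-n+2}/(k-n+2)$. This is the one place where $k > n-2$ is essential; without it the integral would diverge.

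For the density bounds, the Ahlfors-David regularity of $E$ with constant $C_0$ gives
\[
C_0^{-1}\epsilon^k \;\leq\; H^k(E\cap\mathbb{B}(x,\epsilon)) \;\leq\; C_0\,\epsilon^k
\]
for every $x \in E$ and every $\epsilon < d(E)$. Setting $A = C_0^{-1}$, $B = C_0$, and taking $\epsilon_0 > 0$ smaller than $d(E)$, both conditions 1 and 2 of Main Theorem 1 hold \emph{at every} $x_0 \in E$ simultaneously. Since the problem is local, for a fixed $x \in E \subset \Omega$ I would further shrink $\epsilon_0$ so that $\mathbb{B}(x,\epsilon_0) \Subset \Omega$, which is compatible with the localization performed in the proof of Main Theorem 1. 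All hypotheses being in place, Main Theorem 1 delivers the claimed limit.

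There is no substantive obstacle here; the corollary is a straightforward specialization. The only genuine point to note is that the hypothesis $k > n-2$ is the exact threshold needed for $h(r) = r^k$ to lie in the admissible family $\mathcal{F}$, which matches the heuristic in Remark 1 that $F(r) = h(r)/r^{n-1}$ must be locally integrable for property $(*)$ to make sense.
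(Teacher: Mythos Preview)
Your proposal is correct and matches the paper's own treatment: the paper simply states that, taking $h(r)=r^k$ with $k>n-2$, the corollary is a direct consequence of Main Theorem~1, and your write-up spells out exactly the verification of $h\in\mathcal{F}$ and of the two density bounds via the Ahlfors--David regularity constant $C_0$. There is nothing more to add.
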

	\begin{corollary}
		Let $\Omega$ be a domain in $\mathbb{R}^n$ $(n\geq 2)$ and $E\subset\Omega$ be an Ahlfors-David regular set with dimension $k>n-2$. Let $u$ be an upper semicontinuous function and $v$ be a subharmonic function in $\Omega$. Suppose that $u\geq v$ almost everywhere on $E$ with respect to $k$-dimensional Hausdorff measure. Then $u\geq v$ on $E$.
	\end{corollary}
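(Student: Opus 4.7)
The plan is to obtain the corollary as a direct specialization of Main Theorem 2, taking $K = E$, $\mu = H^k$, $h(r) = r^k$, and $N := \{x \in E : v(x) > u(x)\}$. The entire task reduces to verifying that the three hypotheses of that theorem are satisfied for these choices.

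First, I would check that $h(r) = r^k$ lies in the admissible family $\mathcal{F}$. Since the hypothesis $k > n - 2$ makes the exponent $k - n + 2$ strictly positive, a direct integration gives, for any $c > 4$,
$$\int_0^{c\epsilon} \frac{r^k}{r^{n-1}}\, dr = \frac{c^{k-n+2}}{k - n + 2} \cdot \frac{h(\epsilon)}{\epsilon^{n-2}},$$
so the required inequality holds with $M = c^{k-n+2}/(k-n+2)$ (and, say, $\epsilon_0$ arbitrary).

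Next I would read off conditions 1 and 2 of Main Theorem 2 from the data. The defining inequalities of Ahlfors-David regularity state precisely that $C_0^{-1} h(\epsilon) \leq H^k(E \cap \mathbb{B}(x,\epsilon)) \leq C_0 h(\epsilon)$ for every $x \in E$ and every $\epsilon \in (0, d(E))$, which is condition 2 with $A = C_0^{-1}$, $B = C_0$, and $\epsilon_0 = d(E)$. The hypothesis that $u \geq v$ holds $H^k$-almost everywhere on $E$ says exactly $H^k(N) = 0$, i.e.\ condition 1; and $u \geq v$ on $K \setminus N$ holds by the very definition of $N$, giving condition 3. Finally, $E$ is a Borel subset of $\Omega$ because it is closed in $\mathbb{R}^n$, and $H^k$ is a Borel measure on $\mathbb{R}^n$, so the measure-theoretic setup of Main Theorem 2 is in place.

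With all hypotheses verified, Main Theorem 2 yields $u \geq v$ on $K = E$, which is the claim. I anticipate no genuine obstacle: as the paper indicates, this corollary is an immediate specialization of the main result, and the only computation required is the admissibility check for $h(r) = r^k$, which is precisely where the quantitative restriction $k > n-2$ enters.
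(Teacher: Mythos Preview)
Your proposal is correct and follows exactly the route the paper indicates: the paper states that, taking $h(r)=r^k$ with $k>n-2$, this corollary is a direct consequence of Main Theorem 2, and you have simply spelled out that specialization, verifying $h\in\mathcal{F}$ and matching the Ahlfors--David inequalities to condition 2. There is nothing to add.
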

	\section{Other versions of main results}
	The main idea of the proof of Main Theorem 1 is that the functions $f_\epsilon$ is bounded by integrable functions with respect to $\nu$. By upper and lower densities of a measure, this theorem below is another version of Main Theorem 1.
	\begin{theorem}
		Let $\Omega$ be a domain in $\mathbb{R}^n$ $(n\geq 2)$. Let $K$ be a Borel subset of $\Omega$ and $x_0$ be a point of $K$. Suppose that there exist a positive Borel measure $\mu$, a relatively compact open subset $U$ of $\Omega$ that contains $x_0$ and a positive number $s>n-2$ satisfying the following:
		$$\frac{1}{\Theta_*^s(\mu_K,x_0)}\int\limits_U\frac{\Theta^{*s}(\mu_K,w)}{|w|^{n-2}}d\nu(w)<+\infty$$
		where $\nu=\Delta u|_U$.
		Then
		$$\lim\limits_{\epsilon\rightarrow 0}\frac{1}{\mu\big(K\cap\mathbb{B}(x_0,\epsilon)\big)}\int\limits_{K\cap\mathbb{B}(x_0,\epsilon)}u(x)d\mu(x)=u(x_0).$$	
	\end{theorem}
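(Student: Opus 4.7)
The plan is to mimic the proof of Main Theorem 1, replacing the role of the hypothesis on $h$ by the density conditions. After translating to put $x_0 = 0$ and disposing of the case $u(0) = -\infty$ by upper semicontinuity (as in the opening of the proof of Main Theorem 1), I would shrink $U$ if necessary so that $\overline{\mathbb{B}}(0,\epsilon_1) \subset U$ for some $\epsilon_1 > 0$ and apply the Riesz Decomposition Theorem on $U$:
\[
u(x) = \frac{-1}{\max\{1,n-2\}\,\sigma(\partial \mathbb{B}(0,1))}\int_U g(|x-w|)\,d\nu(w) + \varphi(x).
\]
Setting $K_\epsilon = K \cap \mathbb{B}(0,\epsilon)$ and applying Fubini, the conclusion reduces to
\[
\lim_{\epsilon \to 0} \int_U f_\epsilon(w)\,d\nu(w) = \int_U g(|w|)\,d\nu(w), \quad f_\epsilon(w) = \frac{1}{\mu(K_\epsilon)}\int_{K_\epsilon} g(|x-w|)\,d\mu(x),
\]
which I intend to verify by the Dominated Convergence Theorem.

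The pointwise limit $f_\epsilon(w) \to g(|w|)$ for every $w \neq 0$ is immediate from the continuity of $g$ on $(0,\infty)$, while $\nu(\{0\}) = 0$ is obtained by the same reasoning as Step~1 of Main Theorem 1 (which uses $u(0) > -\infty$). For the dominating function, I would split as in Main Theorem 1 with an auxiliary parameter $p > 1$. Case~I, $|w| > p\epsilon$, yields $f_\epsilon(w) \leq C_1 g(|w|) + C_0$ verbatim. In Case~II, $|w| \leq p\epsilon$, the layer-cake identity of Theorem~\ref{theo1} combined with the substitution $r = g^{-1}(t)$ gives
\[
f_\epsilon(w) \leq g(|w|+\epsilon) + \frac{\max\{1,n-2\}}{\mu(K_\epsilon)} \int_0^{|w|+\epsilon} \mu\bigl(K \cap \overline{\mathbb{B}}(w,r)\bigr)\, r^{1-n}\,dr.
\]
Substituting the density bounds $\mu(K_\epsilon) \geq \tfrac{1}{2}\Theta_*^s(\mu_K,0)(2\epsilon)^s$ and $\mu(K \cap \overline{\mathbb{B}}(w,r)) \leq 2\Theta^{*s}(\mu_K,w)(2r)^s$, using $s > n-2$ to make $\int_0^{|w|+\epsilon} r^{s-n+1}\,dr$ converge, and exploiting $|w| \leq p\epsilon$ to convert the resulting $\epsilon^{-(n-2)}$ into $|w|^{-(n-2)}$ up to a constant, produces the target estimate
\[
f_\epsilon(w) \leq F(w) := C_1 g(|w|) + C_0 + C\,\frac{\Theta^{*s}(\mu_K,w)}{\Theta_*^s(\mu_K,0)\,|w|^{n-2}}.
\]
This $F$ is $\nu$-integrable on $U$: the first term since the Riesz decomposition at $x = 0$ together with $u(0) > -\infty$ forces $\int_U g(|w|)\,d\nu(w) < \infty$, the second since $\nu(U) < \infty$, and the third by the standing hypothesis.

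The main obstacle is that the upper-density bound $\mu(K \cap \overline{\mathbb{B}}(w,r)) \leq 2\Theta^{*s}(\mu_K,w)(2r)^s$ is a $\limsup$ statement, valid only for $r < r_0(w)$ with $r_0(w)$ depending on $w$; thus the Case~II bound $f_\epsilon \leq F$ holds for each $w$ only once $(p+1)\epsilon \leq r_0(w)$, and this threshold cannot be chosen uniformly in $w$. I would circumvent this by stratifying: setting
\[
U_\delta = \bigl\{w \in U : \mu(K \cap \overline{\mathbb{B}}(w,r)) \leq 2\Theta^{*s}(\mu_K,w)(2r)^s \text{ for all } 0 < r \leq \delta\bigr\},
\]
one has $U_\delta \uparrow U$ up to a $\nu$-null set. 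Along a sequence $\epsilon_n \to 0$, I would split $\int_U f_{\epsilon_n}\,d\nu$ as an integral over $U_{(p+1)\epsilon_n}$, where $f_{\epsilon_n} \leq F$ holds and standard DCT applies, plus a tail integral over $U \setminus U_{(p+1)\epsilon_n}$, whose $\nu$-measure tends to $0$ and on which $f_{\epsilon_n}$ is controlled by a crude bound. Combining these pieces yields $\int_U f_{\epsilon_n}\,d\nu \to \int_U g(|w|)\,d\nu$, which by the Riesz decomposition translates to the desired mean-value limit.
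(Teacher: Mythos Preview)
Your approach mirrors the paper's exactly: Riesz decomposition on $U$, reduction to $\int_U f_\epsilon\,d\nu\to\int_U g(|w|)\,d\nu$, pointwise convergence via $\nu(\{0\})=0$, and the Case~I/Case~II split with the layer-cake computation in Case~II. The paper arrives at the same dominating function $F(w)=C_1g(|w|)+C_3+\dfrac{C_2}{\Theta_*^s(\mu_K,0)}\,\dfrac{\Theta^{*s}(\mu_K,w)}{|w|^{n-2}}$ and simply invokes dominated convergence, writing the density inequalities $\mu(K_\epsilon)\geq(2\epsilon)^s\Theta_*^s(\mu_K,0)$ and $\mu_K(\mathbb{B}(w,2g^{-1}(t)))\leq(4g^{-1}(t))^s\Theta^{*s}(\mu_K,w)$ without further comment.

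You are right that the latter inequality is a $\limsup$ statement and only holds for radii below a $w$-dependent threshold; the paper glosses over this. However, your proposed repair has two loose ends. First, the claim that $U_\delta\uparrow U$ up to a $\nu$-null set fails as written: if $\Theta^{*s}(\mu_K,w)=0$ but $w\in\mathrm{supp}\,\mu_K$, then $w\notin U_\delta$ for any $\delta$, and nothing in the hypotheses forces such points to form a $\nu$-null set. This is easily patched by replacing $2\Theta^{*s}(\mu_K,w)$ with $\Theta^{*s}(\mu_K,w)+1$ in the definition of $U_\delta$; the extra $|w|^{-(n-2)}$ term in the dominator is still $\nu$-integrable because $\int_U g(|w|)\,d\nu<\infty$ (for $n\geq3$ this is exactly $|w|^{2-n}$, and for $n=2$ one only needs $\nu(U)<\infty$). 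Second, and more seriously, the ``crude bound'' on the tail $\int_{U\setminus U_{\delta_n}} f_{\epsilon_n}\,d\nu$ is not specified. On the Case~I part of the tail you have $f_{\epsilon_n}\leq C_1g(|w|)+C_0$ and absolute continuity applies, but on the Case~II part ($|w|\leq p\epsilon_n$, $w\notin U_{\delta_n}$) no bound on $f_{\epsilon_n}$ has been established, and it is not clear what estimate you intend there. Without this, the stratification argument does not close.
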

	\begin{proof}
		We can assume that $x_0=0$ and $U=\mathbb{B}(0,1)$. Adapting to the technique used in the proof of Main Theorem 1, it remains to show that $f_\epsilon$ is bounded from above by integrable function with respect to $\nu$. Under the assumption of densities, it is sufficient to show that there exist constants $C_1, C_2, C_3$ such that 
		$$f_\epsilon(w)\leq C_1g(|w|)+\frac{C_2}{\Theta_*^s(\mu_K,0)}.\frac{\Theta^{*s}(\mu_K,w)}{|w|^{n-2}} +C_3,$$ for all $w\in \mathbb{B}(0,1)\diagdown\{0\}$ and $\epsilon>0$ small enough. For $p>1$, we also consider two cases as before. The proof differs only in the case II where $|w|\leq p\epsilon$.
		In this case, we have
		\begin{align*}
			f_\epsilon(w)&\leq\frac{1}{\mu(K_\epsilon)} \int\limits_0^{\alpha(\epsilon)}\mu(K_\epsilon)dt +\frac{1}{\mu(K_\epsilon)} \int\limits_{\alpha(\epsilon)}^{+\infty}\mu( K\cap \mathbb{B}(w,2g^{-1}(t))) dt\\
			&\leq\alpha(\epsilon) +\frac{1}{(2\epsilon)^s\Theta_*^s(\mu_K,0)} \int\limits_{\alpha(\epsilon)}^{+\infty}(4g^{-1}(t))^s\Theta^{*s}(\mu_K,w) dt\\
			&\leq g(|w|)+\frac{2^s\max\{1,n-2\}}{s-(n-2)}.\frac{\Theta^{*s}(\mu_K,w)}{\Theta_*^s(\mu_K,0)}.\Big(\frac{|w|+\epsilon}{\epsilon}\Big)^s.\frac{1}{(|w|+\epsilon)^{n-2}}\\
			&\leq
			g(|w|)+\frac{2^s.(p+1)^s\max\{1,n-2\}}{s-(n-2)}.\frac{\Theta^{*s}(\mu_K,w)}{\Theta_*^s(\mu_K,0)}.\frac{1}{|w|^{n-2}},	
		\end{align*}
		as desired.
	\end{proof}
	The next result, as a consequence of Theorem 5.1, is another version of Main Theorem 2. Their proofs are the same.
	\begin{theorem}
		Let $\Omega$ be a domain in $\mathbb{R}^n$ $(n\geq 2)$ and $K$ be a Borel subset of $\Omega$. Let $u$ be an upper semicontinuous function and $v$ be a subharmonic function in $\Omega$. Suppose that there exist a positive Borel measure $\mu$ and a positive number $s>n-2$ such that for all $x\in K$, there exists a relatively compact open subset $U_x$ of $\Omega$ that contains $x$ satisfying:
		$$\frac{1}{\Theta_*^s(\mu_K,x)}\int\limits_{U_x}\frac{\Theta^{*s}(\mu_K,w)}{|w|^{n-2}}d\nu(w)<+\infty$$
		where $\nu=\Delta v|_{U_x}$. If $u\geq v$ almost everywhere on $K$ with respect to $\mu$ then $u\geq v$ on $K$.
	\end{theorem}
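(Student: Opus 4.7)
The plan is to transport the proof of Main Theorem 2 verbatim, substituting Theorem 5.1 for Main Theorem 1 as the mean-value input. Fix an arbitrary $x_0\in K$ and let $U_{x_0}$ be the relatively compact open neighbourhood prescribed by the hypothesis. It suffices to prove $u(x_0)\geq v(x_0)$; since $x_0\in K$ was arbitrary this will then yield the theorem.

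First I would apply Theorem 5.1 to the subharmonic function $v$ at the point $x_0$ with the open set $U_{x_0}$. The stated finiteness of the density-weighted integral against $\nu=\Delta v|_{U_{x_0}}$ is exactly the hypothesis needed by Theorem 5.1, which therefore delivers
$$v(x_0)=\lim_{\epsilon\rightarrow 0}\frac{1}{\mu(K\cap\mathbb{B}(x_0,\epsilon))}\int\limits_{K\cap\mathbb{B}(x_0,\epsilon)}v(x)\,d\mu(x).$$
Next, the upper semicontinuity of $u$ forces $u(y)<\alpha$ on a neighbourhood of $x_0$ for every $\alpha>u(x_0)$, which gives
$$\limsup_{\epsilon\rightarrow 0}\frac{1}{\mu(K\cap\mathbb{B}(x_0,\epsilon))}\int\limits_{K\cap\mathbb{B}(x_0,\epsilon)}u(x)\,d\mu(x)\leq u(x_0).$$

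Finally, since $u\geq v$ holds $\mu$-almost everywhere on $K$, monotonicity of the integral yields
$$\int\limits_{K\cap\mathbb{B}(x_0,\epsilon)}u\,d\mu\geq\int\limits_{K\cap\mathbb{B}(x_0,\epsilon)}v\,d\mu$$
for every $\epsilon>0$. Dividing by $\mu(K\cap\mathbb{B}(x_0,\epsilon))$ and passing to the $\liminf$ as $\epsilon\downarrow 0$ then shows that the $\liminf$ of the $u$-averages is at least $v(x_0)$, and chaining this with the two displays above produces $u(x_0)\geq v(x_0)$.

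I expect no genuine obstacle: the real analytic effort was already spent in the proof of Theorem 5.1 (the pointwise bound on $f_\epsilon$ by the upper and lower densities, followed by Dominated Convergence). The only mildly subtle point is that each $x_0\in K$ may demand its own neighbourhood $U_{x_0}$; but this is precisely why the hypothesis of Theorem 5.2 prescribes $U_x$ pointwise rather than globally, which is exactly what is needed in order to invoke Theorem 5.1 at every base point.
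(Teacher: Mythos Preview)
Your proposal is correct and follows exactly the approach the paper intends: the paper's entire proof of this theorem is the sentence ``Their proofs are the same,'' meaning one repeats the argument of Main Theorem~2 verbatim with Theorem~5.1 replacing Main Theorem~1 as the mean-value input, which is precisely what you have written out. If anything, your version is slightly more careful than the paper's template, since you use a $\limsup$ for the $u$-averages (upper semicontinuity does not guarantee the limit exists) rather than the paper's somewhat loose ``$\lim$''.
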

	\vskip1cm
	
	{\bf Acknowledgment.}  This work forms part of the author's doctoral dissertation under the supervision of Professor Dinh Tien Cuong and Professor Pham Hoang Hiep. The author is grateful to receive valuable comments and strong support from his advisors, Dr. Do Hoang Son and the referees. The author would like to thank IMU and TWAS for supporting his PhD studies through the IMU Breakout Graduate Fellowship.


\begin{thebibliography}{BB}
		\bibitem{Gar} D. Armitage and S. Gardiner, Classical potential theory, Springer-Verlag, London, 2001.
		\bibitem{David} G. David and S. Semmes, Analysis of and on Uniformly Rectifiable Sets, Surveys and Monographs 38, Amer. Math. Soc., 1993.
		\bibitem{Ho} L. H\"ormander, Notion of Convexity, Progess in Mathematics 127, Birkh\"auser, Boston, 1994.
		\bibitem{Klimek} M. Klimek, Pluripotential theory, Clarendon Press, Oxford, New York, Tokyo, 1991, London Math. Soc. Monographs N.S. 6.
		\bibitem{Mattila} P. Mattila, Geometry of Sets and Measures in Euclidean Spaces Fractals and Rectifiability, Cambridge University Press, Cambridge, 1995, Cambridge studies in advanced mathematics. 44.
		\bibitem{Mattila09} P. Mattila and P. Saaranen, Ahlfors-David regular sets and bilipschitz maps, Ann. Acad. Sci. Fenn. Math., 34 (2009), 487-502.
		\bibitem{Rans} T. Ransford, Potential Theory in the Complex Plane, Cambridge University Press, New York, 1995, London Math. Soc. Student Texts. 28.
		\bibitem{Wang} Wang, Q., and L. F. Xi, Quasi-Lipschitz equivalence of Ahlfors-David regular sets. - Nonlinearity 24, 2011, 941-950.
	\end{thebibliography}
\end{document}